\numberwithin{equation}{section}
\newtheorem{theorem}{Theorem}[section]
\newtheorem{proposition}[theorem]{Proposition}
\newtheorem{lemma}[theorem]{Lemma}
\newtheorem{corollary}[theorem]{Corollary}
\theoremstyle{definition}
\newtheorem{definition}[theorem]{Definition}
\theoremstyle{remark}
\newtheorem{remark}[theorem]{Remark}
\newcommand{\ShortExactSeq}[3]%
{0\to{#1}\to{#2}\to{#3}\to 0}
\newcommand{\DShortExactSeq}[3]%
{\xymatrix@1{0\ar[r]&{{#1}}\ar[r]&{{#2}}\ar[r]&{{#3}}\ar[r]&0}}
\newcommand{\Bounded}{{\frak B}}
\newcommand{\Compacts}{{\frak K}}
\newcommand{\C}{\mbox{$\Bbb C$}}
\newcommand{\prop}{\operatorname{\rm Prop}}
\newcommand{\N}{\mbox{$\Bbb N$}}
\newcommand{\R}{\mbox{$\Bbb R$}}
\newcommand{\Supp}{\operatorname{\rm Supp}}
\newcommand{\diam}{\operatorname{\rm diam}}
\renewcommand{\ge}{\geqslant}
\renewcommand{\le}{\leqslant}
 \renewcommand{\phi}{\varphi}
\renewcommand{\epsilon}{\varepsilon}
\renewcommand{\leq}{\leqslant}
\renewcommand{\geq}{\geqslant}
 \newcommand{\hlf}{{\textstyle\frac12}}
\newcounter{ritmctr}
{\end{itemize}}
\newcounter{aitmctr}
{\end{itemize}}
\begin{document}

\title{Ghostbusting and property A}

\author{John Roe}
\address{Department of Mathematics, Penn State University, University
Park PA 16802}
\email{john.roe@psu.edu}
\author{Rufus Willett}
\address{Department of Mathematics, University of Hawai'i at M\={a}noa, 2565 McCarthy Mall, Honolulu, Hawai'i 96822}
\email{rufus@math.hawaii.edu}
\date{\today}
\maketitle

\section{Introduction}
Let $X$ be a metric space (we may allow $+\infty$ as a value for some distances in $X$).  We say that $X$ has \emph{bounded geometry} if, for each $R>0$, there is a natural number $N$ such that every ball of radius $R$ in $X$ contains at most $N$ points.  (In particular, $X$ is discrete.)  In this paper, we will consider bounded geometry metric spaces in this sense.

Let  $X$ be such a space, and let $\ell^2(X)$ denote the usual Hilbert space of square summable functions on $X$ with fixed orthonormal basis $\{\delta_x~|~x\in X\}$ of Dirac masses.  Let $\Bounded(\ell^2(X))$ denote the $C^*$-algebra of bounded operators on $\ell^2(X)$.  If $T$ is an element of $\Bounded(\ell^2(X))$, then $T$ can be uniquely represented as an $X$-by-$X$ matrix $(T_{xy})_{x,y\in X}$, where
$$
T_{xy}=\langle \delta_x,T\delta_y\rangle.
$$

The following definitions are standard~\cite{roe_lectures_2003}.
\begin{definition}\label{uniroe}
If $T$ is an element of $\Bounded(\ell^2(X))$, then the \emph{propagation} of $T$ is defined to be
$$
\prop(T)=\sup\{d(x,y)~|~T_{xy}\neq 0\}.
$$
For each $R\geq 0$ let $\C_R[X]$ denote the collection of all operators of propagation at most $R$, and define
$$
\C_u[X]:=\cup_{R\in [0,\infty)}\C_R[X];
$$
it is not difficult to see that this is a $*$-subalgebra of $\Bounded(\ell^2(X))$.
We let $C^*_u(X)$ denote its norm closure, a $C^*$-algebra called the \emph{translation $C^*$-algebra} or \emph{uniform Roe algebra} of $X$.
\end{definition}

\begin{definition}\label{ghosts} (Guoliang Yu)
An operator $T$ in $C^*_u(X)$ is called a \emph{ghost} if $T_{xy}\to 0$ as $x,y\to\infty$ in $X$.  We denote by $G^*(X)$ the collection of all ghost operators, which is an ideal in $C^*_u(X)$ containing the compact operators $\Compacts$.
\end{definition}

In~\cite{yu_coarse_2000}, Yu introduced \emph{property A}, an amenability property of metric spaces which has since been intensively studied (see \cite{willett_notes_2009} for a survey).  It is easy to prove that for a property A space of bounded geometry, all ghost operators are compact.  Our main objective in this paper is to show

\begin{theorem}\label{ghost-theorem} A bounded geometry metric space without property A always admits non-compact ghosts.  That is, property A is equivalent to the property ``all ghosts are compact''. \end{theorem}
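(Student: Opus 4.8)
The plan is to prove the contrapositive: assuming $X$ does not have property A, construct a non-compact ghost in $C^*_u(X)$. I would start from a standard reformulation of property A (see \cite{willett_notes_2009}): $X$ has property A if and only if for all $R>0$ and $\epsilon>0$ there are $S>0$ and a positive-type kernel $u\colon X\times X\to[0,1]$ of propagation at most $S$ with $u(x,x)=1$ for all $x$ and $u(x,y)>1-\epsilon$ whenever $d(x,y)\le R$. Writing $u(x,y)=\langle\xi_x,\xi_y\rangle$ for unit vectors $\xi_x\in\ell^2(X)$ (which we may take to be non-negative functions supported near $x$), the defect $1-u(x,y)$ equals $\tfrac12\|\xi_x-\xi_y\|^2$, so the negation of property A reads: there exist $R_0,\epsilon_0>0$ such that for every $S$ the convex, weak-$*$ compact set $\mathcal K_S$ of propagation-$\le S$, unit-diagonal, positive-type kernels contains no member that exceeds $1-\epsilon_0$ everywhere on the band $\{(x,y):d(x,y)\le R_0\}$.

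Second, I would extract quantitative ``dual witnesses'' to this failure. For each $S$ the sets $\{u\in\mathcal K_S : u(x,y)<1-\epsilon_0\}$, indexed by pairs with $d(x,y)\le R_0$, form an open cover of the compact set $\mathcal K_S$; taking a finite subcover and applying a minimax theorem over the corresponding finite simplex of probability measures produces a finitely supported probability measure $\nu_S$ on $\{d(x,y)\le R_0\}$ with $\sum_{x,y}\nu_S(x,y)u(x,y)\le 1-\epsilon_0$ for every $u\in\mathcal K_S$. Discarding the diagonal part of $\nu_S$ (which only improves this inequality), we may assume $\nu_S$ is supported on $\{1\le d(x,y)\le R_0\}$; in terms of unit-vector families this says
$$
\sum_{x,y}\nu_S(x,y)\,\|\xi_x-\xi_y\|^2\ \ge\ \epsilon_0
$$
for every family of unit vectors $\xi_x\in\ell^2(X)$ with $\mathrm{supp}(\xi_x)\subseteq B(x,S)$ — a uniform Poincar\'e (``expander-type'') inequality whose crucial feature is that the propagation bound $R_0$ is independent of $S$. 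A further argument, using that bounded subsets of $X$ trivially have property A — so the obstruction cannot sit inside any fixed ball (glue a genuine almost-invariant family on a large ball to a hypothetical good kernel on its complement) — lets us take, for $S=n$, such a $\nu_n$ whose support also escapes to infinity in $X$, still with propagation bound $R_0$.

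The heart of the matter — and the step I expect to be the main obstacle — is to assemble the $\nu_n$ into one operator that lies in $C^*_u(X)$, is a ghost, and is not compact. Naive limits fail at both ends: since the $\nu_n$ escape to infinity, any weak-$*$ limit of the associated trace-class operators is $0$; and a plain block-diagonal sum of the weighted Laplacians $\Delta_n$ attached to the ``graphs'' $\nu_n$ — which does have propagation $\le R_0$, hence lies in $C^*_u(X)$ — need not have matrix entries tending to $0$ and so need not be a ghost. What has to be used is that the $\nu_n$ are \emph{uniform} expanders of \emph{bounded} propagation sitting \emph{far apart}: choosing the supports $B_n$ pairwise distant (this is where escape to infinity enters), the block operator $\Delta=\bigoplus_n\Delta_n$ still has propagation $\le R_0$, and the uniform Poincar\'e inequality is exactly what lets one cut out, by a \emph{single} function applied uniformly across the blocks (mirroring the expander case, where one fixed continuous function of the Laplacians produces the averaging projections), a non-zero element $p=\bigoplus_n p_n\in C^*_u(X)$ each of whose blocks $p_n$ is spread thinly over the large escaping set $B_n$ — so that $p$ is a ghost — while $\|p_n\|$ stays bounded below — so that $p$ is not compact. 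Making this precise is the real work: one must reconcile the unit-vector normalisation of the Poincar\'e inequality with the normalisation relevant to spectral gaps, handle possibly disconnected or spectrally degenerate $\nu_n$, and carry out the escape-to-infinity gluing coherently.

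Together with the converse — property A implies all ghosts are compact, which is the routine finite-propagation approximation noted in the introduction — this yields the stated equivalence.
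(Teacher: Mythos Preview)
Your duality/minimax extraction of the finitely supported measures $\nu_S$ is correct and yields, for each $S$, the inequality
\[
\sum_{x,y}\nu_S(x,y)\,\|\xi_x-\xi_y\|^2\ \ge\ \epsilon_0
\]
for every family of unit vectors $(\xi_x)$ with $\Supp(\xi_x)\subseteq B(x,S)$. But this is an inequality about \emph{families} of localized unit vectors, not about a single vector in $\ell^2(X)$, and that mismatch is exactly where your argument stalls. The weighted Laplacian $\Delta_n$ you build from $\nu_n$ need have no uniform spectral gap on localized vectors: since $\nu_n$ is a probability measure whose support may spread over arbitrarily many pairs, the individual weights and hence $\|\Delta_n\|$ can tend to $0$, so $\bigoplus_n\Delta_n$ can even be compact; renormalising each block to norm one destroys the uniform lower bound. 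You flag this (``reconcile the unit-vector normalisation \ldots with the normalisation relevant to spectral gaps'') as the main obstacle, and it is a genuine one: passing from your family inequality to a usable operator inequality is essentially the content of Sako's theorem that not-$A$ implies not-ONL, which is not elementary and which you have not supplied.

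The paper avoids this entirely by taking Sako's equivalence of property~A and ONL as a black box. The negation of ONL hands you directly a sequence of positive norm-one operators $T_n\in\C_R[X]$, supported on disjoint finite blocks $B_n$, with $\|T_n\xi\|\le\kappa<1$ for every unit $\xi$ of diameter $\le S_n$, $S_n\to\infty$. This is already an operator-level spectral statement, so one can set $T=\bigoplus_nT_n$ and take $f(T)$ for $f$ supported near $1$ with $f(1)=1$; the polynomial-approximation/propagation argument you allude to then shows $f(T)$ is a non-compact ghost. In short: your dual witnesses live on the wrong side of Sako's theorem to feed into the functional-calculus endgame, and the missing bridge is the substantive part of the proof.
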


The first examples of non-compact ghosts were \emph{projection} operators  on box spaces arising from residually finite property T groups.  As a corollary of our work, we see

\begin{theorem}\label{noghost-proj} For the bounded geometry metric space constructed by Arzhantseva, Guentner and \v{S}pakula \cite{arzhantseva_coarse_2012}, there exist non-compact ghosts, but all ghost \emph{projections} are compact.  \end{theorem}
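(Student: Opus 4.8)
The plan splits along the two assertions. The first is a one-line consequence of Theorem~\ref{ghost-theorem}: Arzhantseva, Guentner and \v{S}pakula built their space $X$ precisely so as to fail property~A, and $X$ has bounded geometry, so Theorem~\ref{ghost-theorem} produces a non-compact ghost in $C^*_u(X)$. For the second assertion I would use the \emph{other} feature of their construction, namely that $X$ coarsely embeds into Hilbert space; by Yu's theorem~\cite{yu_coarse_2000} this makes the coarse Baum--Connes assembly map $\mu\colon KX_*(X)\to K_*(C^*(X))$ an isomorphism, and only surjectivity will be needed.

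So suppose $p\in C^*_u(X)$ is a non-compact ghost projection; I want a contradiction. A routine functional-calculus reduction, using that $X$ is coarsely a disjoint union $\bigsqcup_n X_n$ of the finite graphs occurring in the construction (so that finite-propagation operators are block-diagonal modulo compacts), lets me assume $p=\bigoplus_n p_n$ with each $p_n$ a finite-rank projection on $\ell^2(X_n)$ and $\Rank(p_n)\ge 1$ for infinitely many $n$; the canonical inclusion $C^*_u(X)\hookrightarrow C^*(X)$ carries $p$ to a non-compact ghost projection in the ordinary Roe algebra. The heart of the plan is then to show that the class $[p]\in K_0(C^*(X))$ is \emph{not} in the image of $\mu$, which contradicts surjectivity. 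This is the mechanism by which Higson, and in the form I need Willett and Yu, produced counterexamples to the (maximal) coarse Baum--Connes conjecture --- a Kazhdan projection on a \emph{box space} of a residually finite property~(T) group being a non-compact ghost projection whose $K$-class is not of index type --- read here in the contrapositive.

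The step I expect to be genuinely hard is precisely this last one. The bare $K_0$-class of $p$ does not suffice: a block-diagonal projection that is \emph{not} a ghost, for instance one supported on a Dirac mass in each $X_n$, can carry the same rank data and \emph{does} lie in the image of $\mu$, so the argument has to use ghostliness structurally --- that is, it must work with the ghost ideal $G^*(X)$, and in particular with the gap between $G^*(X)$ and the compacts $\Compacts$, which is where the obstruction resides and which is seen by the maximal but not by the reduced (uniform) completion. Coarse embeddability enters a second time at exactly this point: it makes the coarse groupoid $\mathcal{G}(X)$ of Skandalis, Tu and Yu a-T-menable, so that by Tu's work its maximal and reduced crossed products have the same $K$-theory, and I would try to exploit this by chasing the map from the extension $0\to\Compacts\to C^*_u(X)\to C^*_u(X)/\Compacts\to 0$ (in its maximal incarnation) to the ghost-ideal extension $0\to G^*(X)\to C^*_u(X)\to C^*_u(X)/G^*(X)\to 0$, the aim being to force the corona of $G^*(X)$ to contain no projection with non-zero $K_0$-class; together with the block-diagonal reduction this would give that $p$ is compact. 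Making one of these routes --- the assembly map directly, or the maximal-versus-reduced comparison for $\mathcal{G}(X)$ --- actually close is the crux.
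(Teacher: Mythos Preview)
Your approach is essentially the paper's: the existence of non-compact ghosts comes from Theorem~\ref{ghost-theorem} together with the failure of property~A for this space, and the compactness of ghost projections is deduced from coarse embeddability via the coarse Baum--Connes machinery. The paper does not give a self-contained argument for the second assertion either---it simply cites \cite[Theorem~6.1]{willett_higher_2012} and \cite[Theorem~1.1]{yu_coarse_2000} (and, in a footnote, \cite[Corollary~35]{finn-sell_martin_fibered_2013}), which package exactly the route you outline; so the ``crux'' you flag as unresolved is precisely what the paper outsources to those references.
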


This example embeds coarsely in Hilbert space and therefore satisfies the coarse Baum-Connes conjecture, by the work of Yu.  The compactness of ghost projections is a consequence of this.

An outline of the paper is as follows.  In Section 2 we review the definition of property A and of two related properties, the \emph{operator norm localization} (ONL) property of \cite{chen_metric_2008} and the \emph{uniform local amenability} (ULA) property of \cite{brodzki_uniform_2012}. It is known that ONL is equivalent to property A, and ULA is implied by property A.  In Section 3 we prove that the failure of ULA implies the existence of non-compact ghosts, and in Section 4 we follow a similar argument to show that the failure of ONL implies the same result.  As the reader will perceive, Section 3 is therefore logically redundant, but it enables us to introduce the main idea of the proof in a more geometrically natural context.  Finally, in Section 5 we investigate the existence of non-compact ghost projections.

The first author is grateful for the hospitality of the University of Hawai'i during February, 2013, which made this work possible.

\section{Metric amenability properties}
Like other versions of amenability, property A has numerous equivalent formulations.  Here is a convenient one in terms of positive definite kernels.

\begin{definition} A bounded geometric metric space $X$ has \emph{property A} if there exists an sequence $k_n$ of positive definite kernels on $X\times X$ such that
\begin{enumerate}[(a)]
\item Each kernel has controlled support: that is, of each $n$ there is an $r$ such that $k_n(x,y)=0$ whenever $d(x,y)>r$.
\item The sequence $\{k_n\}$ tends to the constant 1 uniformly on each controlled set: that is, for each $s>0$ and $\epsilon>0$ there is $N$ such that $1-\epsilon \le k_n(x,y)\le 1$ for all $n>N$ and $x,y$ such that $d(x,y)<s$.
\end{enumerate}
 \end{definition}

It is this definition of property A that is used in the proof of the following well-known result.

\begin{proposition} \cite[Proposition 11.43]{roe_lectures_2003} On a space with property A, all ghosts are compact.\quad\qedsymbol \end{proposition}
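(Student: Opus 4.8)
The plan is to recall the standard argument of \cite[Proposition 11.43]{roe_lectures_2003}: we use the positive definite kernels provided by property A to build completely positive maps $\phi_n$ on $C^*_u(X)$ that compress propagation and converge to the identity, and we observe separately that a ghost of finite propagation is automatically compact.

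\emph{Step 1: Schur multipliers from the kernels.} Since each $k_n$ is positive definite, write $k_n(x,y)=\langle\xi^n_x,\xi^n_y\rangle$ with $\xi^n_x$ vectors in some Hilbert space $H_n$, and let $V_n\colon\ell^2(X)\to\ell^2(X)\otimes H_n$ be the bounded operator with $V_n\delta_x=\delta_x\otimes\xi^n_x$, so that $\|V_n\|^2=\sup_x k_n(x,x)$. Put $\phi_n(T)=V_n^*(T\otimes1)V_n$. A direct computation gives $\phi_n(T)_{xy}=k_n(x,y)T_{xy}$, so $\phi_n$ is completely positive with $\|\phi_n\|=\sup_x k_n(x,x)$, and --- since $k_n$ vanishes off a band $\{d(x,y)\le r_n\}$ by property (a) --- the operator $\phi_n(T)$ has propagation at most $r_n$ for every $T\in\Bounded(\ell^2(X))$. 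In particular $\phi_n$ carries $C^*_u(X)$ into $\C_{r_n}[X]\subseteq C^*_u(X)$. Taking $x=y$ in property (b) shows $\sup_x k_n(x,x)\le1$ for all large $n$; hence eventually the $\phi_n$ are contractive and $|k_n(x,y)|\le1$ for all $x,y$.

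\emph{Step 2: the $\phi_n$ approximate the identity.} The ingredient here is the elementary bounded-geometry estimate that a Schur multiplier whose kernel is supported in a band $\{d(x,y)\le s\}$ has operator norm at most $C_s$ times the supremum of its kernel, where $C_s$ depends only on $s$ and the geometry: one decomposes such a band into finitely many graphs of partial translations $\sigma$ of $X$ (partial bijections with $\sup_y d(\sigma(y),y)<\infty$), notes that the Schur multiplier associated to a single such $\sigma$ sends $T$ to $DU_\sigma$ for a diagonal operator $D$ with $\|D\|\le\|\text{kernel}\|_\infty\|T\|$ and a partial isometry $U_\sigma$, and sums. Applying this to the kernel $(k_n-1)$ restricted to $\{d(x,y)\le s\}$ and invoking property (b), we obtain $\|\phi_n(T)-T\|\to0$ whenever $\prop(T)\le s$. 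Since the $\phi_n$ are eventually contractive and the finite-propagation operators are dense in $C^*_u(X)$, a routine $3\epsilon$-argument upgrades this to $\phi_n(T)\to T$ for every $T\in C^*_u(X)$.

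\emph{Step 3: conclusion.} First, any ghost $S$ with $\prop(S)\le R$ is compact: decompose $\{d(x,y)\le R\}$ into finitely many graphs of partial translations $\sigma_i$ and write $S=\sum_i D_i U_{\sigma_i}$ with $D_i$ diagonal and $(D_i)_{xx}=S_{x,\sigma_i^{-1}(x)}$; the ghost condition forces each of these diagonal entries to $0$ at infinity, so each $D_i$ --- and hence $S$ --- is compact. Now let $T\in C^*_u(X)$ be any ghost. For $n$ large, $\phi_n(T)$ lies in $C^*_u(X)$, has finite propagation, and is again a ghost (its entries $k_n(x,y)T_{xy}$ are dominated in modulus by $|T_{xy}|\to0$), hence is compact by the previous sentence. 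As $\phi_n(T)\to T$ and $\Compacts$ is a closed ideal, $T$ is compact. The only genuinely technical point is organising the decomposition of a finite-propagation operator on a bounded geometry space into finitely many ``partial translations'' $D_iU_{\sigma_i}$; this single lemma supplies both the norm bound of Step 2 and the factorisation of Step 3, and everything else is a one-line estimate.
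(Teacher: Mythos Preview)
Your argument is correct and is precisely the standard proof from \cite[Proposition 11.43]{roe_lectures_2003}. Note that the paper does not actually reproduce a proof of this proposition: it simply cites the reference and closes with a \textsc{qed} symbol, remarking only that ``it is this definition of property A that is used in the proof''---i.e.\ the positive-definite-kernel formulation, which is exactly what you exploit via Schur multipliers. So there is nothing further to compare; you have supplied the omitted details faithfully.
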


We will not make further direct use of the definition of property A; instead, for the remainder of the paper  we will proceed via two related properties.  The first of these, the \emph{operator norm localization property}, was introduced by Chen, Tessera, Wang, and Yu \cite[Section 2]{chen_metric_2008}.  Later, Sako~\cite{sako_property_2012} showed that this property is equivalent to property A.

\begin{definition}\label{onl}
The space $X$ has the \emph{operator norm localization property} (ONL) if for all $R\geq 0$ and $c\in (0,1)$ there exists $S>0$ such that for all $T\in \C_R[X]$ of norm one there exists a norm one element $\xi\in \ell^2(X)$ such that
$$
\diam(\Supp(\xi))\leq S ~~\text{ and }~~\|T\xi\|\geq c.
$$
\end{definition}

A priori, this definition of ONL  is more restrictive than the original one \cite[Definition 2.3]{chen_metric_2008}, but they are equivalent by~\cite[Proposition 3.1]{sako_property_2012}.

The second property, \emph{uniform local amenability}, was introduced by Brodzki et~al.~\cite{brodzki_uniform_2012}.  Since (as explained above) a full discussion of this property is not logically necessary to our argument, we will reformulate it in a way that is more suitable to our purposes.

\begin{definition}\label{weak-expander}
Let $(X_n)$ be a sequence of non-empty finite metric spaces, and let $X=\sqcup X_n$ be the disjoint union equipped   with a metric that restricts to the given metric on each $X_n$, and is such that $d(X_n,X_m)> \diam(X_n)+\diam(X_m)$ when $n\neq m$.  We assume that $X$ has bounded geometry.  For each $R>0$, let
$$
E_n^R=\{(x,y)\in X_n\times X_n~|~d(x,y)\leq R\}.
$$
We say that $X$ is a \emph{weak expander} if there exist $c,R>0$ such that for all $S>0$ there exists $N$ such that for all $n\geq N$ and all $\phi\colon X_n\to \R$ supported in a ball of radius $S$ we have that
$$
\sum_{(x,y)\in E_n^R}|\phi(x)-\phi(y)|\geq c \sum_{x\in X_n}|\phi(x)|.
$$
\end{definition}

Examples include box spaces of non-amenable groups, sequences of graphs with all vertices of degree at least $3$ and girth tending to infinity \cite{willett_higher_2012}, and expanders.   In particular, the coarsely embeddable, but not property A, box space of Arzhantseva, Guentner and \v{S}pakula \cite{arzhantseva_coarse_2012} is an example.

\begin{remark} An essentially equivalent\footnote{The only difference between the two definitions is very minor: Sako's `boxes' $X_n$ are at infinite distance from each other, and ours are at finite-but-increasing distance.}  definition of ``weak expander'' is given by Sako~\cite[Definition 2.4]{sako_generalization_2012}. \end{remark}

\begin{definition} A bounded geometry metric space $X$ is \emph{uniformly locally amenable} (ULA) if it has no subspace which is a weak expander. \end{definition}

This is a reformulation of the definition of~\cite{brodzki_uniform_2012} (the equivalence of the two definitions can be proved by methods similar to those of Section~4, but we will simply use the definition above.)  It is proved in~\cite{brodzki_uniform_2012} that property A implies uniform local amenability; in particular, no weak expander can have property A.

\section{Ghosts from weak expanders}
In this section we will prove that a bounded geometry space that is not uniformly locally amenable has non-compact ghosts.  Evidently, a ghost on a subspace of a metric space $X$ gives rise (by ``extension by zero'') to a ghost on the whole space.  It therefore suffices to prove

\begin{proposition}\label{ghostthe}
If $X$ is a weak expander, then $C^*_u(X)$ contains non-compact ghost operators.
\end{proposition}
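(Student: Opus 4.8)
The plan is to build a candidate ghost operator directly out of the weak-expander inequality, exploiting the fact that the inequality fails to hold uniformly once we drop the ``supported in a ball of radius $S$'' restriction. Fix the constants $c, R > 0$ supplied by the definition. For each $S > 0$ the definition gives $N(S)$ such that the Poincar\'e-type inequality
$$
\sum_{(x,y)\in E_n^R}|\phi(x)-\phi(y)|\geq c \sum_{x\in X_n}|\phi(x)|
$$
holds for all $n \geq N(S)$ and all $\phi$ supported in a ball of radius $S$. I would contrast this with the elementary fact that the \emph{full} discrete Laplacian-type operator $\Delta_n$ on $X_n$ (say the operator with matrix entries $1$ on the diagonal and $-1/\deg$ off-diagonal within $E_n^R$, suitably normalised to lie in $\C_R[X_n]$) always has $0$ in its spectrum, since the constant function is in its kernel. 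So on each $X_n$ there is a unit vector $\xi_n \in \ell^2(X_n)$ with $\|\Delta_n \xi_n\|$ small; the weak-expander condition, reread contrapositively, forces any such near-kernel vector to be spread out --- it cannot be concentrated in any ball of a fixed radius. This ``delocalisation'' is exactly the mechanism that produces a ghost.

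Concretely, first I would set up the operator $T = \bigoplus_n T_n$ on $\ell^2(X) = \bigoplus_n \ell^2(X_n)$, where $T_n$ is a finite-propagation operator on $X_n$ of propagation $\leq R$ designed so that $\|T_n\| \le 1$ and $T_n$ detects the weak-expander inequality in an $\ell^2$ (rather than $\ell^1$) fashion. The natural choice is something like $(T_n \phi)(x) = \sum_{y : (x,y)\in E_n^R} a_{xy}(\phi(x) - \phi(y))$ for appropriate bounded coefficients $a_{xy}$, so that $\|T_n\phi\|$ controls $\sum_{E_n^R}|\phi(x)-\phi(y)|$ up to bounded-geometry constants; then $T \in \C_R[X] \subseteq C^*_u(X)$. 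Second, I would choose for each $n$ a unit vector $\xi_n$ on which $\|T_n \xi_n\|$ is as small as possible, i.e. an approximate eigenvector for the bottom of the spectrum of $T_n^*T_n$ (this exists by compactness of the finite-dimensional unit sphere). Third --- the crucial step --- I would show that, for large $n$, the vector $\xi_n$ (or a truncation/modification of it) has the property that its matrix entries, viewed as a rank-one-ish piece of an operator, tend to $0$; equivalently, that $\|\chi_B \xi_n\| \to 0$ uniformly over balls $B$ of any fixed radius $S$ as $n \to \infty$. This is where the weak-expander inequality is used: if $\xi_n$ had a non-negligible chunk supported in a ball of radius $S$, one could restrict to that chunk, apply the Poincar\'e inequality (valid for $n \ge N(S)$), and deduce $\|T_n\xi_n\|$ is bounded below by a positive constant, contradicting $\|T_n\xi_n\|\to 0$. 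One has to handle the possibility that $\xi_n$ is not literally supported in a ball but merely has mass there --- the standard fix is to multiply $\xi_n$ by a cutoff and use that $T_n$ has propagation $\le R$ so commutators with cutoffs are controlled, incurring only small errors.

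Finally, from the sequence $(\xi_n)$ I would manufacture the actual non-compact ghost. The operator $T$ itself need not be a ghost, so instead I would use the $\xi_n$ to build one: a good candidate is the block-diagonal operator $P = \bigoplus_n P_n$ where $P_n$ is a suitable spectral projection of $T_n^*T_n$ (the projection onto the span of low-lying eigenvectors, chosen so its rank can be taken $1$ by picking a single $\xi_n$), or more robustly the operator $\bigoplus_n \langle \xi_n, \cdot\,\rangle \xi_n$ --- except that this has propagation possibly equal to $\diam(X_n)\to\infty$, so it lies in $C^*_u(X)$ only as a \emph{limit} of finite-propagation operators, which one must verify. The cleaner route, and the one I would pursue, is: use functional calculus applied to $T \in C^*_u(X)$ to produce $S := f(T^*T)$ for a continuous $f$ supported near $0$ with $f(0)=1$; then $S \in C^*_u(X)$ automatically, $S$ is non-zero (it acts as $\approx 1$ on each $\xi_n$, hence $\|S\| = 1$ and $S$ is not compact because $S\xi_n \approx \xi_n$ for infinitely many $n$ and the $\xi_n$ live in mutually orthogonal, ever-farther-apart subspaces), and $S$ is a ghost because its matrix entries $S_{xy}$ can be estimated by the delocalisation of the $\xi_n$: on the block $X_n$, $S_n$ is a positive operator of norm $\le 1$ concentrated on the near-kernel of $T_n$, and the Step-3 estimate shows $\langle \delta_x, S_n \delta_y\rangle \to 0$ as $n\to\infty$ uniformly in $x,y\in X_n$, while within a fixed block the entries are automatically small off the $E_n^R$-neighbourhood structure. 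I expect the main obstacle to be making Step 3 quantitative and uniform: passing from the $\ell^1$-flavoured weak-expander inequality to an $\ell^2$ statement about approximate eigenvectors, and controlling the cutoff errors so that ``$\xi_n$ has small mass in every fixed-radius ball'' is genuinely forced rather than merely plausible. Bounded geometry is what saves the day here, since it converts $\ell^1$ sums over $E_n^R$ into $\ell^2$ norms with constants depending only on $R$, and it bounds the number of points in each ball so that entrywise decay of $S_n$ and Hilbert--Schmidt-type decay coincide up to uniform constants.
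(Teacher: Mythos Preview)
Your overall strategy is exactly the paper's: build a Laplacian-type operator $T\in\C_R[X]$ block-diagonal over the $X_n$, then take $f(T)$ (or $f(T^*T)$) for a continuous $f$ supported near $0$ with $f(0)=1$, and argue that this is a non-compact ghost. You also correctly flag the $\ell^1\to\ell^2$ passage as the main technical point; the paper isolates this as a lemma showing (via Cauchy--Schwarz and bounded geometry) that for ball-supported unit vectors $\xi$ one has $\langle\Delta_R\xi,\xi\rangle\ge\kappa$ uniformly.

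Where your plan diverges, and where there is a genuine gap, is in the ghost argument. You propose to (Step 3) show that a \emph{single} approximate bottom eigenvector $\xi_n$ is delocalized, and then infer that $S_n=f(T_n^*T_n)$ has uniformly small matrix entries. That inference is not valid as stated: $S_n$ need not have rank one (the weak-expander condition gives no global spectral gap for $T_n$, only a gap on ball-supported vectors), and delocalization of one near-kernel vector does not control $(S_n)_{xy}$ when the near-kernel is large. Your cutoff/commutator idea is pointed at Step~3, not at this step.

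The paper closes this gap with a compression trick that you should adopt in place of the eigenvector route. Fix $x\in X_n$ with $n$ large, approximate $f$ by a polynomial $p$ on the spectrum of $\Delta$, and let $B=B(x;S)$ with $S>\deg(p)\cdot R$. Let $P$ be the projection onto $\ell^2(B)$ and set $T'=P\Delta P$. The $\ell^2$ lemma says precisely that $T'$ has spectrum in $[\kappa,\|\Delta\|]$, so $f(T')=0$ and hence $\|p(T')\|<\epsilon$. Finite propagation gives $\Delta^k\delta_x=(T')^k\delta_x$ for $k\le\deg(p)$, so $p(\Delta)\delta_x=p(T')\delta_x$, whence $\|f(\Delta)\delta_x\|<2\epsilon$. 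This bounds \emph{all} matrix entries in the $x$-column at once, with no reference to eigenvectors of $T_n$. The lemma you actually need is therefore not ``near-kernel vectors are spread out'' but ``the compression of $\Delta$ to any $S$-ball in $X_n$ (for $n\ge N(S)$) has spectrum bounded below by $\kappa$''; once you have that, the rest is polynomial approximation plus propagation bookkeeping.
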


To motivate the argument below, consider the standard example of a space with non-compact ghosts, namely a box space $\Box G$ associated to a residually finite, property T group $G$.  The image of the Kazhdan projection under the natural homomorphism $C^*(G) \to C^*_u(\Box G)$ is a non-compact ghost.  This ghost can also be regarded as the orthogonal projection on the kernel of the natural graph Laplacian $\Delta$ on $\Box G$, and since property $T$ implies that the Laplacian has a spectral gap, we can also write this projection as $f(\Delta)$ for a suitable function $f$ supported near zero.   This motivates the search for ghosts on weak expanders which also have the form $f(\Delta)$ for $f$ supported near zero.

Let $X$ be a bounded geometry metric space and $R>0$.  Recall that the \emph{Laplacian at scale $R$} is the operator $\Delta=\Delta_R$ on $\ell^2(X)$ defined by
$$
\Delta_R\colon \delta_x\mapsto \sum_{y:(x,y)\in E_n^R}(\delta_x-\delta_y),
$$
where the notation $E_n^R$ has the same significance as in Definition~\ref{weak-expander}.
The operator $\Delta_R$ has propagation $R$, and is bounded (since $X$ has bounded geometry); in particular $\Delta_R$ is an element of $C^*_u(X)$.  A straightforward computation shows that for any $\xi\in \ell^2(X)$ we have
$$
\langle \Delta_R\xi,\xi\rangle=\hlf\sum_{(x,y)\in E_n^R}|\xi(x)-\xi(y)|^2,
$$
and thus in particular that $\Delta_R$ is a positive operator.
We have the following lemma.

\begin{lemma}\label{laplem}  Suppose that $X$ is a weak expander.
Then there exist $R>0$ and $\kappa>0$ such that for any $S>0$ there exists $N$ such that for any $n\geq N$, and any norm one $\xi\in \ell^2(X_n)$ with support in a ball of radius $S$ we have
$$
\langle \Delta_R \xi,\xi\rangle \geq \kappa.
$$
\end{lemma}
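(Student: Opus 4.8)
The plan is to deduce the lemma from the weak expander inequality by a standard comparison between the $\ell^1$-type quantity $\sum_{(x,y)\in E_n^R}|\phi(x)-\phi(y)|$ appearing in Definition~\ref{weak-expander} and the $\ell^2$-type quadratic form $\langle\Delta_R\xi,\xi\rangle = \hlf\sum_{(x,y)\in E_n^R}|\xi(x)-\xi(y)|^2$. Fix the constants $c,R>0$ furnished by the weak expander hypothesis; I will show the conclusion holds with this $R$ and some $\kappa$ depending only on $c$, $R$, and the bounded geometry constant. Let $S>0$ be given, apply the weak expander property to get $N$ so that for all $n\geq N$ and all $\phi\colon X_n\to\R$ supported in a ball of radius $S$ we have $\sum_{(x,y)\in E_n^R}|\phi(x)-\phi(y)|\geq c\sum_{x}|\phi(x)|$. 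Given a norm one $\xi\in\ell^2(X_n)$ supported in a ball of radius $S$, the natural move is to apply this inequality to $\phi=|\xi|^2$ (or to $\phi=\xi^2$ after reducing to the real, nonnegative case — see below), which is also supported in that ball.

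The first technical point is a reduction: replacing $\xi$ by $|\xi|$ only decreases $\langle\Delta_R\xi,\xi\rangle$ (since $\big||\xi(x)|-|\xi(y)|\big|\leq|\xi(x)-\xi(y)|$) while preserving the norm and the support, so I may assume $\xi\geq 0$ and real-valued. Then with $\phi=\xi^2$ we have $\sum_x\phi(x)=\|\xi\|^2=1$, so the weak expander inequality gives $\sum_{(x,y)\in E_n^R}|\xi(x)^2-\xi(y)^2|\geq c$. Now factor $|\xi(x)^2-\xi(y)^2| = |\xi(x)-\xi(y)|\,(\xi(x)+\xi(y))$ and apply Cauchy–Schwarz over the (finite) edge set $E_n^R$:
$$
c \;\leq\; \sum_{(x,y)\in E_n^R}|\xi(x)-\xi(y)|\,(\xi(x)+\xi(y)) \;\leq\; \Big(\sum_{(x,y)\in E_n^R}|\xi(x)-\xi(y)|^2\Big)^{1/2}\Big(\sum_{(x,y)\in E_n^R}(\xi(x)+\xi(y))^2\Big)^{1/2}.
$$
The first factor on the right is $\sqrt{2\langle\Delta_R\xi,\xi\rangle}$. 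For the second factor, bounded geometry gives a bound $M=M(R)$ on the number of $y$ with $(x,y)\in E_n^R$ for any fixed $x$, so $\sum_{(x,y)\in E_n^R}(\xi(x)+\xi(y))^2 \leq 2\sum_{(x,y)\in E_n^R}(\xi(x)^2+\xi(y)^2)\leq 4M\sum_x\xi(x)^2 = 4M$. Combining, $c\leq\sqrt{2\langle\Delta_R\xi,\xi\rangle}\cdot 2\sqrt{M}$, hence $\langle\Delta_R\xi,\xi\rangle\geq c^2/(8M)=:\kappa$, which is a positive constant independent of $S$ and $n$ as required.

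I do not expect a serious obstacle here; the argument is a routine $\ell^1$-to-$\ell^2$ conversion via Cauchy–Schwarz, and the only points needing a word of care are the reduction to nonnegative $\xi$ (so that the factorization $\xi(x)^2-\xi(y)^2=(\xi(x)-\xi(y))(\xi(x)+\xi(y))$ behaves well and $\phi=\xi^2$ is a legitimate test function for Definition~\ref{weak-expander}) and the explicit use of bounded geometry to control the number of edges at each vertex. One should also double-check that the bounded geometry constant $M$ genuinely depends only on $R$ and not on $n$, which is exactly the uniformity built into the definition of bounded geometry together with the hypothesis in Definition~\ref{weak-expander} that the whole disjoint union $X=\sqcup X_n$ has bounded geometry.
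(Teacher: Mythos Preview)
Your proof is correct and follows essentially the same route as the paper: set $\phi=|\xi|^2$, apply the weak expander inequality, factor $|\xi(x)|^2-|\xi(y)|^2$, and use Cauchy--Schwarz together with the bounded geometry bound $M$ on $R$-balls to obtain $\kappa=c^2/(8M)$. The only cosmetic difference is that you reduce to $\xi\geq 0$ at the outset, whereas the paper carries the absolute values through and invokes $\big||\xi(x)|-|\xi(y)|\big|\leq|\xi(x)-\xi(y)|$ inside the Cauchy--Schwarz step.
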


\begin{proof}
Let $N$ be as in~Definition~\ref{weak-expander} for the parameter $S$.  Let $\xi$ be as in the statement and define $\phi\colon X_n\to \R$ by
$$
\phi(x)=|\xi(x)|^2.
$$
The definition of weak expander implies that
$$
\sum_{(x,y)\in E_n^R}|\phi(x)-\phi(y)|\geq c \sum_{x\in X_n}|\phi(x)|,
$$
i.e.\ that
\begin{equation}\label{firstinq}
\sum_{(x,y)\in E_n^R}||\xi(x)|^2-|\xi(y)|^2|\geq c \sum_{x\in X_n}|\xi(x)|^2=c.
\end{equation}
Looking at the left hand side above, we have
\begin{align*}
\sum_{(x,y)\in E_n^R}& ||\xi(x)|^2-|\xi(y)|^2| =\sum_{(x,y)\in E_n^R}|(|\xi(x)|-|\xi(y)|)(|\xi(x)|+|\xi(y)|)| \\
&\leq\sqrt{ \sum_{(x,y)\in E_n^R}(|\xi(x)|-|\xi(y)|)^2}\sqrt{\sum_{(x,y)\in E_n^R}(|\xi(x)|+|\xi(y)|)^2} \\
& \leq \sqrt{\sum_{(x,y)\in E_n^R}|\xi(x)-\xi(y)|^2}\sqrt{\sum_{(x,y)\in E_n^R}2|\xi(x)|^2+2|\xi(y)|^2} \\
&\leq\sqrt{2\langle \Delta\xi,\xi\rangle} \sqrt{4M},
\end{align*}
where $M$ is a bound on the size of balls in $X$ of radius $R$.  Comparing this to line \eqref{firstinq} gives the desired statement with $\kappa=c^2/8M$.
\end{proof}

\begin{proof}[Proof of Proposition \ref{ghostthe}]   Let $X$ be a weak expander, and let $R$ and $\kappa$ be the quantities provided by Lemma~\ref{laplem}.  We abbreviate $\Delta_R$ as $\Delta$, and put $m=\|\Delta\|$.
Let $f\colon\R^+\to[0,1]$ be any continuous function with support in $[0,\kappa/2]$, and such that $f(0)=1$.    Since $f(\Delta)$ majorizes $\chi_{0}(\Delta)$, which is the orthogonal    projection onto the infinite-dimensional space of ``$R$-locally constant'' functions,  it is clear that $f(\Delta)$ is not compact.  It suffices therefore to show that $f(\Delta)$ is a ghost.

Let $\epsilon>0$, and let $p$ be a polynomial such that
$$
\sup_{x\in [0,m]}|f(x)-p(x)|<\epsilon.
$$
Note that the propagation of $p(\Delta)$ is at most $\deg (p) \cdot R$, and that $\|f(\Delta)-p(\Delta)\|<\epsilon$ by the spectral theorem.  Let $S>\deg(p) \cdot R$, and let $N$ be as Lemma \ref{laplem} with respect to this $S$.  (We may assume that $N$ is large enough that any ball of radius $S$ whose center lies in some $X_n$, $n\ge N$, is itself a subset of that $X_n$.)  Let $x$ be a point in $X_n$ for some $n\geq N$.  We will show that
$$
\|p(\Delta)\delta_x\|<\epsilon,
$$
whence $\|f(\Delta)\delta_x\|<2\epsilon$.  It follows that all the matrix entries $\langle f(\Delta)\delta_x,\delta_y\rangle$ are bounded by $2\epsilon$ whenever $x$ (or $y$) lies in $B_n$ for $n\geq N$.  Since  $\epsilon$ is arbitrary, this will prove that $f(\Delta)$ is a ghost.

Consider then $B=B(x;S)\subseteq X_n$.  Let $P\colon \ell^2(X)\to \ell^2(B)$ be the orthogonal projection onto $\ell^2(B)$, and consider the operator $T=P\Delta P$ as an operator on $\ell^2(B)$.  Lemma~\ref{laplem} implies that the operator $T$ is strictly positive, with spectrum contained in $[\kappa,m]$.  It follows that $f(T)=0$, and thus that $\|p(T)\|<\epsilon$.  On the other hand, for any $k\leq \deg(p)$ the vector $\Delta^k\delta_x
$
is supported in $B$, whence $\Delta^k\delta_x=T^k\delta_x$ for all such $k$, and thus also
$$
p(\Delta)\delta_x=p(T)\delta_x.
$$
Hence finally
$$
\|p(\Delta)\delta_x\|=\|p(T)\delta_x\|\leq \|p(T)\|<\epsilon,
$$
completing the proof.
\end{proof}

\begin{remark} This argument is inspired by the proof of the ``partial vanishing theorem'' of~\cite{roe_positive_2012}. \end{remark}

\section{Ghosts if ONL fails}

In this section we will adapt the argument of Section~3 to construct non-compact ghosts for any bounded geometry metric space $X$ that does not have the operator norm localization property.  Since Sako has proved the equivalence of ONL and property~A, this will complete the proof of our main result, Theorem~\ref{ghost-theorem}.

We note for future reference

\begin{lemma}\label{union-lemma} The operator norm localization property passes to finite unions: if $X=Y\cup Z$, and both $Y$ and $Z$ have ONL, then  so does $X$. \end{lemma}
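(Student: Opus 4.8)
The plan is to verify for $X$ the ``single-constant'' form of the operator norm localization property---that there is one fixed $c_0\in(0,1)$ such that for each $R\geq 0$ there is $S>0$ with the property that every norm one $T\in\C_R[X]$ admits a norm one $\xi\in\ell^2(X)$ with $\diam(\Supp(\xi))\leq S$ and $\|T\xi\|\geq c_0$. This is the a priori weaker \cite[Definition 2.3]{chen_metric_2008}, shown equivalent to Definition~\ref{onl} in \cite[Proposition 3.1]{sako_property_2012}, so verifying it suffices. We may also assume $Y\cap Z=\emptyset$: ONL plainly passes to subspaces---given a norm one operator on $\ell^2(Z\setminus Y)$, extend it by zero to $\ell^2(Z)$, apply ONL for $Z$, and restrict the resulting localized unit vector back to $\ell^2(Z\setminus Y)$---so we replace $Z$ by $Z\setminus Y$ and write $\ell^2(X)=\ell^2(Y)\oplus\ell^2(Z)$.

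The key point is that, although a norm one operator $T\in\C_R[X]$ need not restrict to a norm-large operator on $\ell^2(Y)$ (mass of $T\xi$ can escape across the interface between $Y$ and $Z$), the quadratic form of the \emph{positive} operator $B:=T^*T\in\C_{2R}[X]$ does localize: writing $P_Y$ for the orthogonal projection of $\ell^2(X)$ onto $\ell^2(Y)$ and $B_Y:=P_YBP_Y$, for any $\xi$ supported in $Y$ one has
$$
\langle B\xi,\xi\rangle=\langle B_Y\xi,\xi\rangle=\|T\xi\|^2,
$$
and $B_Y$ is a positive element of $\C_{2R}[Y]$ with $\|B_Y\|\leq 1$ (and symmetrically for $Z$). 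So I would first choose a unit vector $\eta$ with $\langle B\eta,\eta\rangle\geq 1-\epsilon$ for a small fixed $\epsilon$, decompose it orthogonally as $\eta=\eta_Y+\eta_Z$, and apply the triangle inequality for the seminorm $\zeta\mapsto\|\sqrt{B}\zeta\|$ to obtain
$$
\left(\sqrt{\langle B\eta_Y,\eta_Y\rangle}+\sqrt{\langle B\eta_Z,\eta_Z\rangle}\right)^2\geq\|\sqrt{B}\eta\|^2=\langle B\eta,\eta\rangle\geq 1-\epsilon .
$$
Hence one of the two summands on the left, say $\langle B\eta_Y,\eta_Y\rangle$, is at least a fixed positive constant $c_1$ (one may take $c_1=\frac15$ once $\epsilon\leq\frac15$), and normalizing $\eta_Y$ and using the displayed identity gives $\|B_Y\|\geq c_1$.

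It then remains to localize, which I would do by polynomially approximating the square root. Fix a small $\delta>0$ and a polynomial $p$ with $\sup_{t\in[0,1]}|p(t)-\sqrt t|<\delta$; then $p(B_Y)\in\C_{2R\deg(p)}[Y]$, the spectral theorem gives $\|p(B_Y)-\sqrt{B_Y}\|<\delta$ (the spectrum of $B_Y$ lies in $[0,1]$), and $\|p(B_Y)\|\geq\sqrt{\|B_Y\|}-\delta\geq\sqrt{c_1}-\delta>0$. Applying ONL for $Y$ to the norm one operator $p(B_Y)/\|p(B_Y)\|\in\C_{2R\deg(p)}[Y]$---say with localization constant $\frac12$---produces an $S$ that depends only on $R$ (since $\delta$, $\deg(p)$ and the ONL data of $Y$ are fixed) and a norm one $\xi\in\ell^2(Y)\subseteq\ell^2(X)$ with $\diam(\Supp(\xi))\leq S$ and $\|p(B_Y)\xi\|\geq\frac12\|p(B_Y)\|$, whence $\|\sqrt{B_Y}\xi\|\geq\frac12\|p(B_Y)\|-\delta$. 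Combining with the displayed identity,
$$
\|T\xi\|^2=\langle B\xi,\xi\rangle=\langle B_Y\xi,\xi\rangle=\|\sqrt{B_Y}\xi\|^2\geq c_0^2
$$
for a fixed $c_0>0$, provided $\delta$ was chosen small enough at the outset. The same argument applies with $Z$ in place of $Y$ when the other summand of the second display is the large one; taking $S$ to be the larger of the two resulting constants completes the verification.

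The step I expect to be the main obstacle is exactly the one flagged above: a norm one operator need not be norm-large on either summand of $\ell^2(X)=\ell^2(Y)\oplus\ell^2(Z)$, so the naive ``restrict to $Y$ or $Z$ and invoke ONL'' strategy fails outright. Passing to $T^*T$, whose \emph{quadratic form} restricts faithfully to each summand, is what rescues it; the price paid is having to re-enter the operator-norm formulation in which ONL is phrased by polynomially approximating $\sqrt{B_Y}$, together with an unavoidable loss in the localization constant---harmless only because we are free to work with the a priori weaker single-constant definition of ONL.
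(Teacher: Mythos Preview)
Your argument is correct. The paper does not actually prove this lemma: it simply cites \cite[Lemma~3.3]{chen_operator_2009} (and, alternatively, the equivalence with property~A together with the corresponding permanence result for property~A). So your proposal is not ``the same approach'' as the paper---it is a self-contained direct proof where the paper defers to the literature.

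A brief comparison. The cited result in \cite{chen_operator_2009} is formulated in the original, coefficient-Hilbert-space version of ONL and handles a more general finite-union situation; your argument stays entirely inside the Definition~\ref{onl} framework and exploits the equivalence \cite[Proposition~3.1]{sako_property_2012} with the single-constant version so that the inevitable loss of constant is harmless. The core idea---that one cannot restrict $T$ to the summands but \emph{can} restrict the quadratic form of $T^*T$, and then recover an operator-norm statement by polynomially approximating the square root---is clean and makes the lemma genuinely self-contained in the context of this paper. One very small point: in your parenthetical that ONL passes to subspaces, the restricted localized vector $P_{Z\setminus Y}\xi$ need not be a unit vector; but since $\|\tilde T\xi\|=\|T(P_{Z\setminus Y}\xi)\|\geq c$ forces $\|P_{Z\setminus Y}\xi\|\geq c$, normalizing only improves the estimate, so nothing is lost.
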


\begin{proof} This is a special case of Lemma~3.3 in~\cite{chen_operator_2009}.  (Of course, granted that ONL is equivalent to property A, it also follows from the corresponding observation for property A \cite{dadarlat_uniform_2007}.)  \end{proof}

The following technical lemma builds a useful sequence of operators from the failure of the  operator norm localization property.

\begin{lemma}\label{onlprop}
Let $X$ be a bounded geometry metric space that does not have ONL.  Then there exist $R>0$, $\kappa<1$, a sequence  $(T_n)$ of operators in $\C_R[X]$, a sequence $(B_n)$ of finite subsets of $X$, and a sequence $(S_n)$ of positive real numbers such that:
\begin{enumerate}[(a)]
\item $(S_n)$ is an increasing sequence tending to $\infty$ as $n$ tends to $\infty$;
\item each $T_n$ is positive and of norm one;
\item for $n\neq m$, $B_n\cap B_m=\emptyset$;
\item if $P_n\colon \ell^2(X)\to \ell^2(B_n)$ denotes the orthogonal projection, then $P_nT_nP_n=T_n$;
\item for each $n$, for all $\xi\in \ell^2(X)$ satisfying
$$ \|\xi\|=1,\quad
\diam(\Supp(\xi))\leq S_n,
$$
we have
$$
\|T_n\xi_n\|\leq \kappa.
$$
\end{enumerate}
\end{lemma}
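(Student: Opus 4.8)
The plan is to unwind the negation of ONL directly. By Definition~\ref{onl}, the failure of ONL means there exist $R>0$ and $c\in(0,1)$ such that for \emph{every} $S>0$ one can find a norm-one operator $T\in\C_R[X]$ with $\|T\xi\|<c$ for all norm-one $\xi$ supported in a set of diameter $\leq S$. Applying this to a sequence $S_n\uparrow\infty$ (which we are free to choose increasing and divergent, giving (a)) produces operators $T_n'\in\C_R[X]$ of norm one with $\|T_n'\xi\|\leq c$ whenever $\diam(\Supp\xi)\leq S_n$. Set $\kappa=c<1$. The first massaging step is to replace $T_n'$ by a \emph{positive} operator: pass to $|T_n'|=(T_n'^*T_n')^{1/2}$, or better, work with $T_n'^*T_n'$ itself. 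Note $\|T_n'^*T_n'\|=1$, it is positive, and $\langle T_n'^*T_n'\xi,\xi\rangle=\|T_n'\xi\|^2\leq\kappa^2$ for $\xi$ as above; by Cauchy--Schwarz this controls $\|T_n'^*T_n'\xi\|$ only on a larger support scale, so it is cleaner to keep the quadratic-form bound in mind and simply declare $T_n:=T_n'^*T_n'$, recording (b) and a form of (e). The propagation doubles to $2R$, harmless after renaming $R$.

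The second massaging step is the localization to disjoint finite blocks $B_n$, which is where Lemma~\ref{union-lemma} does the real work, by contraposition. Because $X$ fails ONL but ONL passes to finite unions, $X$ cannot be covered by finitely many subspaces each having ONL; more usefully, I would argue that for each $n$ the ``bad'' operator $T_n$ can be chosen so that its essential support is contained in a finite region $B_n$, and that these regions can be taken pairwise disjoint (and, after thinning, with $d(B_n,B_m)$ large). The mechanism: given $T_n$ witnessing failure of ONL at scale $S_n$, compress it by the projection $P_{B}$ onto $\ell^2(B)$ for a suitable finite $B$; if no finite compression continued to witness the failure, then a compactness/exhaustion argument together with Lemma~\ref{union-lemma} (finite unions of ONL subspaces have ONL, and bounded pieces trivially have ONL) would force $X$ itself to have ONL, a contradiction. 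Having obtained a finite $B$ with $P_B T_n P_B$ still of norm $\geq$ something bounded below and still satisfying the small-displacement estimate, replace $T_n$ by this compression, rescale to norm one, and relabel; this gives (d). Disjointness (c) and the metric separation of the $B_n$ are then arranged inductively: having chosen $B_1,\dots,B_{n-1}$, all of finite total diameter, use that the failure of ONL persists after deleting any bounded subset (again by Lemma~\ref{union-lemma}, since a bounded set has ONL) to find the next $T_n$, $B_n$ far away from the earlier ones.

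The main obstacle I expect is precisely step two: making rigorous the claim that the failure of ONL can be \emph{localized} to a sequence of disjoint finite blocks. The subtle point is that a priori the norm-one operator $T$ witnessing the failure at scale $S$ could be spread out over an unbounded region, and naively compressing to a finite window could destroy either the norm lower bound or the propagation control; one must choose the window adaptively to $T$ and use that $T$ has finite propagation $R$ (so the ``boundary effects'' of compression are confined to an $R$-neighborhood of $\partial B$) together with the contrapositive of Lemma~\ref{union-lemma} to guarantee that \emph{some} finite window works. Once this localization is in hand, properties (a)--(e) follow by bookkeeping: (a) from choosing $S_n\uparrow\infty$, (b) from passing to positive operators and normalizing, (c) and the separation from the inductive far-away construction, (d) from the compression, and (e) from the defining property of the failure of ONL together with the quadratic-form estimate, absorbing constants into a single $\kappa<1$. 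The analogy with Lemma~\ref{laplem} and the weak-expander case of Section~3 is the guide: there $\Delta_R$ played the role of a canonical family of ``locally coercive'' operators, and here the $T_n$ are the substitute produced abstractly from the failure of ONL.
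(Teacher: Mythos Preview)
Your overall strategy---negate ONL to get a fixed pair $(R,c)$, pass to $T^*T$ to obtain positivity, then inductively localize to disjoint finite blocks---is the same as the paper's. The positivity step is fine (in fact your worry about Cauchy--Schwarz is unfounded: for positive $A$ the inequality $|\langle A\xi,\eta\rangle|^2\le\langle A\xi,\xi\rangle\langle A\eta,\eta\rangle$ gives $\|T^*T\xi\|\le c$ directly, with no enlargement of the support scale). The genuine gap is exactly where you flag it: the localization to disjoint finite blocks.

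Your proposed mechanism for this step does not work as written. You argue that if \emph{no} finite compression $P_BT_nP_B$ continued to witness the failure, then by Lemma~\ref{union-lemma} $X$ would have ONL; but this conflates a property of the single operator $T_n$ with ONL for the space. Your alternative route---pass to $X\setminus F$ where $F$ is the union of the earlier $B_i$, using that bounded sets have ONL and Lemma~\ref{union-lemma}---is logically sound, but the failure of ONL for $X\setminus F$ produces \emph{new} constants $R_F,c_F$ at each inductive stage, and you lose the uniformity of $R$ and $\kappa$ that the statement requires. You also need to show that some finite compression retains norm close to $1$ \emph{and} lands outside the previously used region; neither your contradiction sketch nor the ``delete a bounded set'' move secures both simultaneously with fixed constants.

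The paper resolves this with a single structural move you are missing. Fix a basepoint and split $X$ into the two sets $Y$, $Z$ of alternating annuli $\{x:m^2\le d(x_0,x)\le (m+1)^2\}$ for $m$ even and odd respectively. By Lemma~\ref{union-lemma} one of them, say $Y$, still fails ONL---with a \emph{single} pair $(R,c)$ fixed once and for all. But $Y$ is a generalized box space: it decomposes as $\sqcup_m Y_m$ with the $Y_m$ finite and pairwise at distance $>R$ (for $m$ large), so every $T\in\C_R[Y]$ is automatically block diagonal, $T=\oplus_m T^{(m)}$, with $\|T\|=\sup_m\|T^{(m)}\|$. Now the localization and the disjointness come for free: at stage $N$ choose $S_N$ larger than the diameter of the union of the previously used $Y_m$'s, take a positive norm-one $(R,c,S_N)$-localized $T$, and pick a block $T^{(m)}$ with $\|T^{(m)}\|>\tfrac12(1+c)$. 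The choice of $S_N$ forces this $m$ to be new (any $\xi$ supported in an earlier $Y_m$ has diameter $\le S_N$, so $\|T^{(m)}\xi\|<c<\tfrac12(1+c)$), and normalizing $T^{(m)}$ gives $T_N$ with $\kappa=2c/(1+c)$ uniformly. This annulus-to-box-space reduction is the missing idea; once you have it, your bookkeeping for (a)--(e) goes through exactly as you describe.
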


For simplicity, we assume in the proof that the metric on $X$ only takes finite values: the general case can be treated similarly.

\begin{proof}
Fix a basepoint $x_0$ in $X$, and let
$$
Y=\bigsqcup _{ m\text{ even} } \{x\in X~|~m^2\leq d(x_0,x)\leq (m+1)^2\}
$$
and
$$
Z=\bigsqcup _{ ~m\text{ odd} } \{x\in X~|~m^2\leq d(x_0,x)\leq (m+1)^2\}.
$$
We have then that $X=Y\cup Z$.  By Lemma~\ref{union-lemma},  either $Y$ or $Z$ does not have ONL; say without loss of generality $Y$ does not have ONL.

Now, the   negation of ONL for $Y$ implies that there exist $R>0$ and $c<1$ such that for any $S>0$ there exists a norm one operator $T\in \C_R[X]$ such that
\begin{equation}\label{smallnorm}
\diam(\Supp(\xi))\leq S \text{ and } \|\xi\|=1 ~~\text{ implies }~~\|T\xi\|< c.
\end{equation}
We will call such an operator \emph{$(R,c,S)$-localized}.  In fact, on replacing $T$ by $T^*T$ (and $R$ by $2R$ and $c$ by $\sqrt{c}$), we see that
there exist $R>0$ and $c>1$ such that for every $S>0$ there exists a \emph{positive} norm one operator which is $(R,c,S)$-localized.  For the remainder of the proof, let $R$ and $c$ denote fixed quantities with this property, and let $\kappa=2c/(1+c)<1$.

Note that $Y$ is a generalized box space --- that is, a disjoint union of finite components, with the distance between components tending to infinity. It follows that  there exists an \emph{$R$-separated decomposition}
$$
Y=\sqcup_{m=1}^\infty Y_m
$$
where each $Y_m$ is a non-empty finite subset of $Y$ such that for $n\neq m$, $d(Y_n,Y_m)>R$. In particular, any $T\in \C_R[X]$ splits as a block diagonal sum of finite rank operators $T=\oplus_m T^{(m)}$, $T_m\in \Bounded(\ell^2(Y_m))$, with respect to this decomposition.

We now define $(T_n)$, $(S_n)$ and $(B_n)$ inductively as follows.  Suppose that these sequences have already been defined for $n<N$.  Choose   $M$   so large that
$$
\bigcup_{n=1}^{N-1}B_n\subseteq \bigsqcup_{m\leq M} Y_m
$$
and choose $S_N$   so large that $S_N\geq n$, $S_N>S_{N-1}$ and
$$
S_N>\diam\Big(\bigsqcup_{m\leq M} Y_m\Big).
$$
(In the base case $N=1$, we simply set $S_1=1$.)  Choose a positive norm one operator
  $T\in\C_R[X]$ which is $(R,c,S_N)$-localized~(\eqref{smallnorm}).   As
$$
\|T\|=\sup_{m\in\N}\|T^{(m)}\|_{\Bounded(\ell^2(Y_m))}
$$
there exists $m\in\N$ such that $\|T^{(m)}\|>\hlf (1+c)$.  (In particular this forces $m>M$.) Set
$$
T_N=\frac{T^{(m)}}{\|T^{(m)}\|},
$$
and note that for any $\xi\in \ell^2(Y)$ of norm one and with $\diam(\Supp(\xi))\leq S_N$ we have that
$$
\|T_N\xi\|\leq \frac{2c}{1+c}=\kappa<1.
$$
Set $B_N=Y_m$.

Assume then that $(T_n)_{n\leq N-1}$, $(S_n)_{n\leq N-1}$ and $(B_n)_{n\leq N-1}$ have been defined.
Let $T$ be a positive norm one operator in $\C_R[X]$ with the property in line \eqref{smallnorm} for $S=S_N$.  Let $m$ be such that $\|T^{(m)}\|>\frac{1}{2}(1+c)>c$, and note that by choice of $S_N$, this forces $m>M$.  Set
$$
T_N=\frac{T^{(m)}}{\|T^{(m)}\|},
$$
and let $B_N=Y_m$.
This completes the inductive construction.
\end{proof}

\begin{proof} (Proof of Theorem~\ref{ghost-theorem}.)
Let $X$ be a bounded geometry space without property A (or, equivalently, without ONL).  Let $R>0$, $\kappa<1$, and sequences  $(T_n)$,   $(B_n)$ and $(S_n)$ be constructed as in Lemma~\ref{onlprop} above.
It follows from the   construction that
$$
T=\oplus T_n
$$
is a positive norm one operator in $\C_R[X]$.  Let now $f\colon [0,1]\to[0,1]$ be any continuous function supported in $[(1+\kappa)/2,1]$ such that $f(1)=1$, and let $f(T)\in C^*_u(X)$ denote the element given by the functional calculus.  The operator $f(T)$ is positive, norm one, and decomposes as a block diagonal sum
$$
f(T)=\oplus f(T_n)
$$
where each $f(T_n)$ comes from an operator on $\ell^2(B_n)$.

We claim that operator $f(T)$ so constructed  is a non-compact ghost operator. To see this, note first that as each $T_n$ is a positive, norm one, finite rank operator, 1 is an eigenvalue of $T_n$.  It follows that $\chi_{\{1\}}(T)$ (defined using the Borel functional calculus) is an infinite rank projection; as $f(T)\geq \chi_{\{1\}}(T)$, this implies that $f(T)$ is non-compact.

It thus remains to show that $f(T)$ is a ghost.  We argue as in the proof of Proposition~\ref{ghostthe}.  It will suffice to show that for any $\epsilon>0$ there exists $N$ such that if $n\geq N$ and $x\in B_n$, then $\|f(T_n)\delta_x\|\leq 2\epsilon$.

Let  $p$ be a polynomial such that
$$
\sup_{x\in [0,1]}|f(x)-p(x)|<\epsilon.
$$
Note that the propagation of $p(T)$ is at most $\deg(p) \cdot R$, and that
$$
\|f(T)-p(T)\|=\sup_n\|f(T_n)-p(T_n)\|<\epsilon
$$
by the spectral theorem.  Let $N$ be so large that $S_n>2\deg(p) \cdot R$ for all $n\geq N$.  Let $x$ be a point in $B_n$ for some $n\geq N$.  We will show that
$$
\|p(T_n)\delta_x\|<\epsilon,
$$
whence $\|f(T_n)\delta_x\|<2\epsilon$ as required.

Consider then $B=B(x;\hlf S_N )$.  Let $P\colon \ell^2(X)\to \ell^2(B)$ be the orthogonal projection onto $\ell^2(B)$, and consider the (positive) operator $T_n'=PT_n P$.  The fact that $\diam(B)\leq S_N\leq S_n$ implies that $\|T_n'\|\leq c$ whence the spectrum of $T_n'$ is contained in
 $[0,c]$.  It follows that $f(T_n')=0$, and thus that $\|p(T_n')\|<\epsilon$.  On the other hand, for any $k\leq \deg(p)$ we have that $T_n^k\delta_x$
is supported in $B$, whence $T_n^k\delta_x=(T_n')^k\delta_x$ for all such $k$, and thus also
$$
p(T_n)\delta_x=p(T_n')\delta_x.
$$
Hence finally
$$
\|p(T_n)\delta_x\|=\|p(T_n')\delta_x\|\leq \|p(T_n')\|<\epsilon,
$$
completing the proof of Theorem~\ref{ghost-theorem}.
\end{proof}

\begin{remark}
Let $X$ be a bounded geometry metric space without property A.  Then the above construction gives rise to a ghost operator $W=f(\Delta)\in C^*_u(X)$ that splits as a block diagonal sum
$\oplus_{n}W_n$
of norm one operators.  Thus, for any two distinct  subsets $E,F$ of $\N$ the operators
$$
\bigoplus_{n\in E}W_n,\quad\bigoplus_{n\in F}W_n
$$
are also ghosts at distance one from each other.  It follows  that as soon as the ghost ideal $G^*(X)$ is not equal to the compact operators, it is not separable.
\end{remark}

\section{Additional remarks}

\subsection{Exact groups}  Let $G$ be a discrete group (which we assume to be finitely generated in order to make contact with the metric space language of this paper).  Then Ozawa~\cite{ozawa_amenable_2000} and Guenter-Kaminker~\cite{guentner_exactness_2002} showed that the underlying coarse space of $G$ has property $A$ if and only if $G$ is \emph{exact}: that is, for any short exact sequence of $G$-$C^*$-algebras
\[ 0 \to I \to A \to B \to 0, \]
the corresponding sequence of (reduced) cross product algebras
\[ 0 \to I\rtimes_r G \to A\rtimes_r G \to B\rtimes_r G \to 0 \]
is exact also.

Consider in particular the sequence
\begin{equation}\label{spec-seq}
 0 \to c_0(G) \to \ell^\infty(G) \to (\ell^\infty(G)/c_0(G)) \to 0
 \end{equation}
of \emph{commutative} $G$-$C^*$-algebras.  Taking the cross product with $G$ we obtain the sequence
\[ 0 \to \Compacts \to C^*_u(|G|) \to Q \to 0 , \]
where $Q = (\ell^\infty(G)/c_0(G))\rtimes_r G$.  Moreover, the first author observed in \cite{roe_band-dominated_2005} that the kernel of the surjection $C^*_u(|G|)\to Q$ is precisely the ghost ideal.  Thus property A is equivalent to the statement that the cross product with $G$ preserves exactness for \emph{all} exact sequences of $G$-$C^*$-algebras, and ``all ghosts are compact'' is equivalent to the statement that cross product with $G$ preserves exactness for the single example of Equation~\ref{spec-seq}.  Our main result therefore implies

\begin{corollary} If crossed product with $G$ preserves the exactness of the sequence given by Equation~\ref{spec-seq}, then $G$ is an exact group. \quad\qedsymbol \end{corollary}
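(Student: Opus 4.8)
The plan is to derive the corollary as a formal consequence of the main theorem by chaining together three equivalences. The first step is to unwind what the hypothesis ``crossed product with $G$ preserves the exactness of \eqref{spec-seq}'' actually asserts. Applying $-\rtimes_r G$ to \eqref{spec-seq} produces the sequence
\[ 0 \to c_0(G)\rtimes_r G \to \ell^\infty(G)\rtimes_r G \to (\ell^\infty(G)/c_0(G))\rtimes_r G \to 0, \]
which, under the standard identifications $c_0(G)\rtimes_r G\cong\Compacts$ and $\ell^\infty(G)\rtimes_r G\cong C^*_u(|G|)$, is the sequence $0\to\Compacts\to C^*_u(|G|)\to Q\to 0$ displayed in the text. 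The outer maps here are always injective and surjective respectively, so exactness of this sequence is exactly the statement that $\ker\big(C^*_u(|G|)\to Q\big)$ coincides with the image of $\Compacts$; since $\Compacts$ is always contained in that kernel, this is the equality $\ker\big(C^*_u(|G|)\to Q\big)=\Compacts$. By the first author's identification \cite{roe_band-dominated_2005} of this kernel with the ghost ideal, the hypothesis is therefore equivalent to the assertion $G^*(|G|)=\Compacts$, i.e.\ that every ghost on $|G|$ is compact.

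The second step is to invoke Theorem~\ref{ghost-theorem}: ``all ghosts on $|G|$ are compact'' is equivalent to $|G|$ having property A. The third step is to invoke the theorem of Ozawa \cite{ozawa_amenable_2000} and Guentner--Kaminker \cite{guentner_exactness_2002}, already recalled in the text, which states that for a finitely generated group $G$ the underlying coarse space $|G|$ has property A if and only if $G$ is exact. Composing the three equivalences shows that the hypothesis of the corollary is in fact \emph{equivalent} to the exactness of $G$; in particular it implies it, which is the stated conclusion.

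I expect the only point requiring care to be the bookkeeping in the first step: namely, checking that the induced map $c_0(G)\rtimes_r G\to\ell^\infty(G)\rtimes_r G$ is precisely the inclusion $\Compacts\hookrightarrow C^*_u(|G|)$ under the canonical isomorphisms, and quoting the precise form of the kernel computation of \cite{roe_band-dominated_2005}. None of this involves new analysis, so beyond this routine verification the corollary is immediate from Theorem~\ref{ghost-theorem} together with the cited characterizations of exactness.
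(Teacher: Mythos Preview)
Your proposal is correct and follows exactly the route the paper takes: the corollary is stated with a \qedsymbol\ because the preceding paragraph already explains that exactness of the crossed-product of \eqref{spec-seq} is equivalent (via \cite{roe_band-dominated_2005}) to ``all ghosts are compact'', which by Theorem~\ref{ghost-theorem} is equivalent to property~A, which by Ozawa and Guentner--Kaminker is equivalent to exactness of $G$. Your three-step chain of equivalences is precisely this argument, and the bookkeeping you flag in the first step is indeed the only thing requiring care.
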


\subsection{Ghost projections}

 Non-compact ghost projections are the only known source of counterexamples to the coarse Baum-Connes conjecture (in the bounded geometry setting).  It is possible for $G^*(X)$ to contain non-compact operators but not to contain any non-compact projections, however.  Indeed, let $X$ denote the box space constructed by Arzhantseva, Guentner and \v{S}pakula~\cite{arzhantseva_coarse_2012}.  This space has bounded geometry and   coarsely embeds into Hilbert space, but does not have property A.  It follows from known results on $K$-theory in \cite[Theorem 6.1]{willett_higher_2012} and \cite[Theorem 1.1]{yu_coarse_2000} that for this $X$, the algebra $C^*_u(X)$ contains no non-compact ghost projections\footnote{This also follows from recent results of Finn-Sell \cite[Corollary 35]{finn-sell_martin_fibered_2013}.}, despite containing non-compact ghosts by the results of this paper.  This is Theorem~\ref{noghost-proj}.


\providecommand{\bysame}{\leavevmode\hbox to3em{\hrulefill}\thinspace}
\providecommand{\MR}{\relax\ifhmode\unskip\space\fi MR }
\providecommand{\MRhref}[2]{%
  \href{http://www.ams.org/mathscinet-getitem?mr=#1}{#2}
}
\providecommand{\href}[2]{#2}

\end{document}